\newtheorem{thrm}{Theorem}[section]
\newtheorem{lem}[thrm]{Lemma}
\newtheorem{prop}[thrm]{Proposition}
\theoremstyle{definition}
\newtheorem{remark}[thrm]{Remark}
\newtheorem{example}[thrm]{Example}
\numberwithin{equation}{section}
\numberwithin{equation}{section}
\newcommand{\e}{{\mathrm{e}}}
\title{$\mathcal{P}$-canonical forms and complete inverses}
\author{\bf M. MOU\c{C}OUF}
\date{}
\subjclass[2010]{15AXX, 15A09, 16U99}
\keywords{Drazin inverses, Complete inverses, $\mathcal{P}$-Canonical form, Powers}
\begin{document}
\maketitle
\begin{center}
{\footnotesize Department of Mathematics, Faculty of Science, Chouaib Doukkali University,\\Morocco\\
Email: moucouf@hotmail.com}
\end{center}
\begin{abstract}
This paper describes a new kind of inverse for elements in associative ring, that is the complete inverse, as the unique solution of a certain set of equations. This inverse exists for an element $a$ if and only if the Drazin inverse of $a$ exists. We also show that by plugging in $-k$ for $k$ in the $\mathcal{P}$-canonical form of a square matrix $A$, we get the $\mathcal{P}$-canonical form of the complete inverse of $A$.
\end{abstract}
%
\section{Introduction}
\label{sec1}
%
Let $R$ be an associative ring. A generalized inverse of an element $a$ of $R$ is an element that has some useful inverse properties and it is defined by the equation $aga=a$ where if $a$ is invertible we have $g=a^{-1}$. It is shown that the most natural generalized inverse is the Drazin inverse. Recall that the algebraic definition of the Drazin inverse $a_{d}$ of an element $a$ of $R$, for some positive integer $p$, is ~\cite{Draz}
$$a_{d}aa_{d}=a_{d}, \,\,aa_{d}=a_{d}a,\,\,a^{p}a_{d}a=a^{p}$$
In this paper we consider a new generalization of the notion of \enquote{inverse}, the idea is to start from the relationship $aga-gag=a-g$ which is consistent with the formula $aa^{-1}a-a^{-1}aa^{-1}=a-a^{-1}$. More precisely, we define the complete inverse $a_{c}$ of $a$ to be the unique element, if it exists, satisfying the equations
$$aa_{c}a-a_{c}aa_{c}=a-a_{c}, \,\, aa_{c}=a_{c}a,\,\,a^{p}a_{c}a=a^{p},$$
for some positive integer $p$. Then we give some properties of this inverse.
\\Next we prove that, in the case of a matrix $A$ over a field $F$ by plugging in $-k$ for $k$ in the $\mathcal{P}$-canonical form of $A$ we get the $\mathcal{P}$-canonical form of the matrix $A_{c}$.
%
\section{The complete inverse of an element of $R$}
\label{sect:The inverse}
%
Let $a$ be an element of an associative ring $R$. Let $n$ be a given positive integer, and consider the following equations in the unknown $x\in R$
$$\begin{array}{rlll}
a^{n}xa&=&a^{n}, \hspace*{8pc} & (1^{n})\\
xax&=&x, \hspace*{8pc} & (3)\\
xax-axa&=&x-a, \hspace*{8pc} & (4)\\
ax&=&xa, \hspace*{8pc} & (5)
\end{array}
$$
Let $U$ be a subset of $\{1^{n}, 3, 4, 5\}$. We say that $x$ is a $U$-inverse of $a$ if $x$ satisfies equation $(i)$ for all $i\in U$. The $\{1^{n},3,5\}$-inverse of $a$ is unique whenever it exists and it is called the Drazin inverse $a_{d}$ of $a$ (see e.g. Theorem $1$ of~\cite{Draz}). In this section we show some relationships between elements of $R$ and their Drazin and complete inverses.
\begin{lem}\label{lem 10} Let $a$ be an element of an associative ring $R$ and $n$ be a given positive integer. Suppose that $x$ is the $\{1^{n},3,5\}$-inverse of $a$ and set $v=a^{2}x$. Then
\\1. $v$ is the $\{1^{n},3,5\}$-inverse of $x$.
\\2. $v^{m}=a^{m}$ for all positive integer $m\geq n$.
\end{lem}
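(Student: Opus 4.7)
The plan is to exploit the commutativity $ax = xa$ from equation $(5)$ so that $a$ and $x$ may be manipulated as elements of a commutative subring of $R$, and to isolate the idempotent $e := ax = xa$ that emerges from equations $(3)$ and $(5)$.

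First I would record three auxiliary identities. The element $e = ax$ is idempotent because $(ax)(ax) = a(xax) = ax$, so for every $m \geq 1$ one has $a^m x^m = (ax)^m = e$. Commutativity also gives $x^2 a = x\cdot xa = x\cdot ax = xax = x$, and then, expanding $x^k a^k = (xa)^k = e$, one obtains $x^{k+1} a^k = x\cdot e = xe = x^2 a = x$ for every $k \geq 1$.

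For part $1$ I would verify the three defining equations of the $\{1^n,3,5\}$-inverse with $x$ playing the role of $a$ and $v = a^2 x$ the role of the inverse. Equation $(5)$, $xv = vx$, is immediate from commutativity. For equation $(3)$, $vxv = a^4 x^3 = a^2(a^2 x^3) = a^2\cdot x = v$, using $a^2 x^3 = x^3 a^2 = x$ from the auxiliary identity. For equation $(1^n)$, $x^n v x = x^n\cdot a^2 x^2 = x^{n-1}(x^3 a^2) = x^{n-1}\cdot x = x^n$. Uniqueness of the $\{1^n,3,5\}$-inverse then identifies $v$ as this inverse of $x$.

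For part $2$ I would compute $v^m = a^{2m} x^m$ for $m \geq n$ via the factorization $a^{2m}x^m = a^m(a^m x^m) = a^m\cdot e = a^{m+1} x$. It then suffices to show $a^{m+1} x = a^m$ for $m \geq n$, which follows by multiplying equation $(1^n)$ on the left by $a^{m-n}$ to obtain $a^m x a = a^m$ and then using $ax = xa$. The main obstacle is really just bookkeeping: choosing the right factorizations (notably $a^{2m}x^m = a^m\cdot a^m x^m$ and $x^n a^2 x^2 = x^{n-1}\cdot x^3 a^2$) so that the two underlying identities $a^m x^m = e$ and $a^{m+1}x = a^m$ do the work. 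Once those are in hand, every verification reduces to a one-line calculation in the commutative subring generated by $a$ and $x$.
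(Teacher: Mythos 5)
Your proof is correct and follows essentially the same route as the paper: part 2 rests on the idempotence of $ax$ (so $v^{m}=a^{m}(ax)^{m}=a^{m+1}x=a^{m}$), which is exactly the paper's computation. For part 1 the paper merely cites Drazin, whereas you verify the three equations $xv=vx$, $vxv=v$, $x^{n}vx=x^{n}$ directly; your verifications are all valid.
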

\proof~
\\1. This result is well known and easy to see~\cite{Draz}.
\\2. It is easy to see that $(ax)^{k}=ax$ for all positive integer $k$. Hence
\begin{eqnarray*}
v^{m}&=&a^{m}(ax)^{m}\\
&=&a^{m+1}x\\
&=&a^{m}.
\end{eqnarray*}
\endproof
\begin{thrm}\label{ thm 10}
Let $a$ be an element of an associative ring $R$ and $n$ be a given positive integer. Suppose that $x$ is the $\{1^{n},3,5\}$-inverse of $a$ and set $z=a+x-v$ where $v=a^{2}x$. Then $z$ is the unique $\{1^{n},4,5\}$-inverse of $a$.
\end{thrm}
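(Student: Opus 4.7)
The strategy is to exploit that the Drazin inverse $x$ commutes with $a$, so $a$, $x$, and $v = a^{2}x$ lie in a commutative subring; then all three equations $(1^{n})$, $(4)$, $(5)$ for $z = a + x - v$ collapse to routine polynomial manipulations. I would first record the auxiliary idempotent $e := ax = xa$, for which $e^{2} = e$, $ex = xe = x$, $v = ae = ea$, $v^{2} = a^{2}e$, $av = v^{2}$, together with $a^{n+1}x = a^{n}$ (which follows from $a^{n}xa = a^{n}$ and commutativity). These identities absorb almost all of the algebraic bookkeeping.

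Equation $(5)$ for $z$ is immediate from pairwise commutativity, and $(1^{n})$ is the one-line check
$$a^{n}za \;=\; a^{n+2} + a^{n}xa - a^{n+3}x \;=\; a^{n+2} + a^{n} - a^{n+2} \;=\; a^{n}.$$
For $(4)$, commutativity gives $zaz = az^{2}$ and $aza = a^{2}z$; expanding $z^{2} = (a + x - v)^{2}$ and using $ax = xv = e$ together with $av = v^{2}$ collapses the cross terms to $z^{2} = a^{2} + x^{2} - v^{2}$. A direct computation of $az^{2} - a^{2}z$ using $ax^{2} = x$ and $av^{2} = a^{2}v = a^{3}e$ then produces $x - v = z - a$, as required.

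For uniqueness, let $y$ be any $\{1^{n},4,5\}$-inverse of $a$. My first move is to show that the element $a^{n}y^{n+1}$ itself satisfies all three Drazin axioms for $a$. Commutativity with $a$ is automatic, while iterating the relation $a^{n+1}y = a^{n}$ (which follows from $(1^{n})$ and $(5)$) gives $a^{n+k}y^{k} = a^{n}$ for every $k \ge 0$; specialising to $k = n+1$ yields both $a^{n} \cdot (a^{n}y^{n+1}) \cdot a = a^{2n+1}y^{n+1} = a^{n}$ and $(a^{n}y^{n+1}) \cdot a \cdot (a^{n}y^{n+1}) = a^{2n+1}y^{2n+2} = a^{n}y^{n+1}$. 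By uniqueness of the Drazin inverse, $a^{n}y^{n+1} = x$, and in particular $x$ now lies in the commutative subring generated by $a$ and $y$.

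With that in hand, I multiply $a^{n+1}y = a^{n}$ on the right by $x^{n}$ and use $a^{n}x^{n} = (ax)^{n} = e$ to obtain $vy = e$, hence $ey = x$. Setting $w := y - a$, equation $(4)$ rearranges to $aw \cdot y = w$, i.e.\ $w(ay) = w$; iterating and invoking $(ay)^{n+1} = a \cdot a^{n}y^{n+1} = ax = e$ gives $w = ew$. Therefore $w = ew = ey - ea = x - v$, and so $y = a + w = a + x - v = z$. The delicate step I anticipate is the identification $a^{n}y^{n+1} = x$, since it simultaneously recovers the Drazin inverse from $y$ and brings all relevant elements into a single commutative subring where the remaining manipulations make sense.
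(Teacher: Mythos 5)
Your proof is correct. The existence half is essentially the paper's own argument (direct verification of $(1^{n})$, $(4)$, $(5)$ using commutativity, the idempotent $e=ax$, and the reduction $z^{2}=a^{2}+x^{2}-v^{2}$), but your uniqueness argument takes a genuinely different route. The paper first invokes the double-commutant property of the Drazin inverse (Theorem 1 of Drazin) to conclude that any $\{1^{n},4,5\}$-inverse $u$ commutes with $z$, subtracts the two instances of equation $(4)$ to get $-(u-z)=a^{2}(u-z)-a(u-z)(u+z)$, and then repeatedly peels powers of $a$ off $a^{n+1}(u-z)=0$ until $u=z$. You instead verify that $a^{n}y^{n+1}$ satisfies $(1^{n})$, $(3)$, $(5)$ and identify it with $x$ by uniqueness of the Drazin inverse, which places $x$ inside the commutative subring generated by $a$ and $y$; the identities $vy=e$, $ey=x$, and $w=(ay)^{n+1}w=ew$ with $w=y-a$ then give $y=a+x-v$ in closed form. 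Your version needs only the uniqueness of the $\{1^{n},3,5\}$-inverse rather than the stronger double-commutant theorem, replaces the paper's informal ``this process can be repeated as often as desired'' descent with a single finite computation, and yields the explicit by-product $x=a^{n}y^{n+1}$ recovering the Drazin inverse from any $\{1^{n},4,5\}$-inverse. The paper's argument, on the other hand, is shorter once the commutation of $u$ and $z$ is granted. All the individual steps you flag as delicate (in particular $a^{2n+1}y^{2n+2}=a^{n}y^{n+1}$ and $(ay)^{n+1}=e$) check out.
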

\proof~
\\It is clear that $z$ commutes with $a$. Further, we have
\begin{eqnarray*}
a^{n+1}z&=&a^{n+2}+a^{n+1}x-a^{n+1}v\\
&=&a^{n+2}+a^{n}-v^{n+2}\\
&=&a^{n},
\end{eqnarray*}
and hence $z$ is a $\{1^{n}\}$-inverse of $a$.
\\We also have $$aza=a^{3}+a^{2}x-a^{2}v$$ and
$$zaz=a^{3}+ax^{2}+av^{2}+2a^{2}x-2a^{2}v-2axv.$$
Then
\begin{eqnarray*}
zaz-aza&=&ax^{2}+av^{2}+a^{2}x-a^{2}v-2axv\\
&=&x+av^{2}+v-av^{2}-v\\
&=&x+a^{2}x-2axv\\
&=&x-v\\
&=&z-a.
\end{eqnarray*}
For the uniqueness, let $u$ be a $\{1^{n},4,5\}$-inverse of $a$. Observe first that $u$ and $z$ commute since $x$ commutes with every element of $R$ which commutes with $a$ (see Theorem $1$ of~\cite{Draz}). On the other hand, we have
$$a^{n+1}u=a^{n}\quad\text{and}\quad a^{n+1}z=a^{n}$$
Then
\begin{eqnarray}\label{eq,,1}
a^{n+1}(u-z)=0.
\end{eqnarray}
 If we substract the equations $$az^{2}-a^{2}z=z-a$$ and $$au^{2}-a^{2}u=u-a,$$ we obtain $$-(u-z)=a^{2}(u-z)-a(u-z)(u+z).$$ Hence $$-a^{n}(u-z)=a^{n+2}(u-z)-a^{n+1}(u-z)(u+z).$$
Thus $$a^{n}(u-z)=0,$$
and we have peeled off one factor of $a$ from~\ref{eq,,1} to get $a^{n}(u-z)=0$. Clearly this process can be repeated as often as desired to obtain $u=z$.
\endproof
\begin{prop} Let $a$ be an element of an associative ring $R$ and $n$ be a given positive integer. Suppose that $x$ is the $\{1^{n},3,5\}$-inverse of $a$ and set $z=a+x-v$ where $v=a^{2}x$. Then
\\1. For all positive integer $k$, $z^{k}$ is the $\{1^{n},4,5\}$-inverse of $a^{k}$.
\\2. $z^{m}=x^{m}$ for all positive integer $m\geq n$.
\end{prop}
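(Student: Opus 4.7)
The plan is to compute $z^{k}$ in closed form and then recognize the result as the expression produced by Theorem~2.1 applied to $a^{k}$. The key algebraic observation is that the idempotent $e = ax = xa$ splits $z$ into two commuting, orthogonal summands.

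From $xax = x$ and $ax = xa$ one gets $(ax)^{2} = a(xax) = ax$ and $ax^{2} = (xa)x = xax = x$, so $e$ is idempotent and $xe = x$. Since $ae = a^{2}x = v$, this rewrites $z$ as
\[
z \;=\; a + x - v \;=\; a(1-e) + x,
\]
with the crucial orthogonality
\[
a(1-e)\cdot x \;=\; ax - a(ax)x \;=\; ax - a(ax^{2}) \;=\; ax - ax \;=\; 0.
\]
Since $a$ and $x$ commute, so do $a(1-e)$ and $x$; combined with the vanishing product, the binomial expansion of $z^{k}$ collapses to
\[
z^{k} \;=\; \bigl(a(1-e)\bigr)^{k} + x^{k} \;=\; a^{k}(1-e) + x^{k},
\]
(the second equality using that $1-e$ is idempotent). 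Substituting back $e = ax$ and using $(ax)^{k} = ax$ to rewrite $a^{k+1}x = a^{k}(ax)^{k} = a^{2k}x^{k}$, one obtains the identity
\[
z^{k} \;=\; a^{k} + x^{k} - a^{2k}x^{k}. \qquad(\star)
\]

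For Part 1 it now suffices to check that $x^{k}$ is a $\{1^{n},3,5\}$-inverse of $a^{k}$, because then Theorem~2.1 applied to $a^{k}$ identifies the unique $\{1^{n},4,5\}$-inverse of $a^{k}$ as $a^{k} + x^{k} - a^{2k}x^{k}$, which is exactly $z^{k}$ by $(\star)$. Equation (5) for $(a^{k},x^{k})$ is immediate; equation (3) follows from $x^{k}a^{k}x^{k} = x^{k}(ax)^{k} = x^{k}\cdot ax = x^{k-1}(xax) = x^{k}$; and equation $(1^{n})$ reduces to $a^{kn+1}x = a^{kn}$, which is obtained from the given identity $a^{n+1}x = a^{n}$ by multiplying through by $a^{(k-1)n}$. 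Part 2 then follows directly from $(\star)$: for $m \geq n$ we have $a^{2m}x^{m} = a^{m}(ax)^{m} = a^{m}\cdot ax = a^{m+1}x$, and iterating $a^{n+1}x = a^{n}$ yields $a^{m+1}x = a^{m}$ for every $m \geq n$; hence $z^{m} = a^{m} + x^{m} - a^{m} = x^{m}$.

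I expect the only real obstacle to be spotting the decomposition $z = a(1-e) + x$ together with the vanishing product $a(1-e)\cdot x = 0$; without this, expanding $z^{k} = (a + x - v)^{k}$ directly via the multinomial theorem looks unpleasant. Once the decomposition is in hand, the proof reduces to a short sequence of substitutions using the three identities $ax = xa$, $xax = x$, and $a^{n+1}x = a^{n}$.
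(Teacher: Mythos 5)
Your proof is correct and arrives at the same pivotal identity as the paper, namely $z^{k}=a^{k}+x^{k}-v^{k}$ with $v=a^{2}x$, but by a different mechanism. The paper proves this identity by induction on $k$, expanding $z^{k+1}=(a^{k}+x^{k}-v^{k})(a+x-v)$ and cancelling the cross terms one by one via the identities $av^{k}=a^{k}v=v^{k+1}$, $xv^{k}=a^{k}x$ and $vx^{k}=ax^{k}$; you instead exhibit the splitting $z=a(1-e)+x$ with $e=ax$ idempotent, note that the two summands commute and annihilate each other, and let the binomial expansion collapse. Your route is more conceptual: it makes visible the idempotent decomposition underlying the whole construction and replaces the inductive bookkeeping with a one-line computation. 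The remainder of your argument coincides with the paper's: Part 1 is reduced to the uniqueness statement (the paper's Theorem 2.2) applied to $a^{k}$ once one knows that $x^{k}$ is the $\{1^{n},3,5\}$-inverse of $a^{k}$ (the paper asserts this without verification, so your explicit check of $(1^{n})$, $(3)$, $(5)$ for the pair $(a^{k},x^{k})$ is a welcome addition), and Part 2 is the observation $v^{m}=a^{m}$ for $m\geq n$, which is the paper's Lemma 2.1(2). One small caveat: $R$ is only assumed to be an associative ring, so an identity element, and hence the symbol $1-e$, is not literally available; this is cosmetic, since every occurrence can be read as the element $a-ae=a-v$, and the two facts you need, $(a-ae)x=0$ and $(a-ae)^{k}=a^{k}-a^{k}e$, are established by the same computations without ever invoking a unit.
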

\proof~
\\1. Since $x^{k}$ and $v^{k}$ are the $\{1^{n},3,5\}$-inverse of $a^{k}$ and the $\{1^{n},3,5\}$-inverse of $x$ respectively, it is sufficient to show that $z^{k}=a^{k}+x^{k}-v^{k}$. To prove this result, we will proceed by induction on $k$. If $k=1$, this is trivial. Let $k\geq 1$, and assume that $z^{k}=a^{k}+x^{k}-v^{k}$. We write
\begin{eqnarray*}
z^{k+1}&=&z^{k}z\\
&=&(a^{k}+x^{k}-v^{k})(a+x-v)\\
&=&a^{k+1}+x^{k+1}+v^{k+1}-(av^{k}+a^{k}v)+(a^{k}x-xv^{k})+(ax^{k}-vx^{k}).
\end{eqnarray*}
Furthermore, we have
\begin{eqnarray*}
av^{k}&=&a^{k+1}(ax)^{k}\\
&=&a^{k+1}(ax)^{k+1}\\
&=&v^{k+1},
\end{eqnarray*}
\begin{eqnarray*}
a^{k}v&=&a^{k+1}ax\\
&=&a^{k+1}(ax)^{k+1}\\
&=&v^{k+1},
\end{eqnarray*}
\begin{eqnarray*}
xv^{k}&=&x(a^{2}x)^{k}\\
&=&a^{k}(ax)^{k}x\\
&=&a^{k}(ax)x\\
&=&a^{k}x
\end{eqnarray*}
and
\begin{eqnarray*}
vx^{k}&=&a^{2}xx^{k}\\
&=&a^{2}x^{2}x^{k-1}\\
&=&axx^{k-1}\\
&=&ax^{k}.
\end{eqnarray*}
Then $$z^{k+1}=a^{k+1}+x^{k+1}-v^{k+1},$$ and the proof is complete.
\\2. Follows imediately from Lemma~\ref{lem 10} and the fact that $z_{m}=a^{m}+x^{m}-v^{m}$.
\endproof
\begin{prop} Let $a$ be an element of an associative ring $R$ and $n$ be a given positive integer. Suppose that $x$ is the $\{1^{n},3,5\}$-inverse of $a$ and set $z=a+x-v$ where $v=a^{2}x$. Then
\\1. $v$ is the $\{1^{n},3,5\}$-inverse of $z$ and the $\{1^{n},4,5\}$-inverse of $x$.
\\2. $a$ is the $\{1^{n},4,5\}$-inverse of $z$.
\end{prop}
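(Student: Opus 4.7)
The plan is to verify, in each of the three claims, the defining equations. Since $x$ commutes with $a$, the element $v = a^2 x$ is a polynomial in $a$ and $x$ and commutes with both, so $z = a + x - v$ commutes with $a$, $x$ and $v$ as well. This makes equation (5) immediate in every case, and all the work lies in the remaining (3) or (4) equation together with the power equation $(1^n)$.

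The first preparatory step is to collect a short toolkit of identities: $xax = x$, from which $ax^2 = x$; $(ax)^k = ax$, which gives $a^{n+1}x = a^n$; the formula $v^k = a^{k+1}x$ for every $k \ge 1$ (a one-line induction using $(ax)^k = ax$), and in particular $v^m = a^m$ for $m \ge n$; the fact that $z^m = x^m$ for $m \ge n$, from the previous proposition; and the statement of Lemma \ref{lem 10} that $v$ is the $\{1^n,3,5\}$-inverse of $x$.

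With these in hand the three verifications are short. For 1(a), equation (3) expands as $vzv = vav + vxv - v^3$; commutativity together with $v^2 = a^3 x$ gives $vav = v^3$, and $vxv = v$ by Lemma \ref{lem 10}, so $vzv = v$. For $(1^n)$ in 1(a), use $z^n = x^n$ to obtain $z^n v z = z^{n+1} v = x^{n+1} v = x^n = z^n$, the middle equality being the $\{1^n\}$-inverse property of $v$ for $x$ (rewritten via $vx = xv$). For 1(b), equations $(1^n)$ and (5) are free from Lemma \ref{lem 10}, and (4) reduces to checking $xvx = x$; a direct computation gives $xvx = x\,a^2 x^2 = ax^2 = x$. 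For claim 2, equation $(1^n)$ runs through the same pattern, $z^n a z = z^{n+1} a = x^{n+1} a = x^n\cdot ax = x^n v x = x^n$, and equation (4), namely $aza - zaz = a - z$, is precisely the negation of the identity $zaz - aza = z - a$ already proved in Theorem \ref{ thm 10}, so nothing further is required. The only step I would pause to double-check is the small-index form $v^k = a^{k+1}x$ for $k = 2, 3$, which is needed before we are in the $m \ge n$ regime; beyond this minor bookkeeping there is no real obstacle, as every equation collapses after one or two commutations to an identity already on the shelf.
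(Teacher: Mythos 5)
Your proof is correct and follows essentially the same route as the paper: direct verification of each defining equation using commutativity, the identities $ax^2=x$ and $v^k=a^{k+1}x$, Lemma~\ref{lem 10} for the $\{1^{n},3,5\}$-relations between $v$ and $x$, the previous proposition for $z^{m}=x^{m}$ ($m\geq n$), and Theorem~\ref{ thm 10} for the equation $aza-zaz=a-z$. The only cosmetic difference is that you substitute $z^{n+1}=x^{n+1}$ directly where the paper expands $z^{n+1}$ as $a^{n+1}+x^{n+1}-v^{n+1}$; both rest on the same prior result.
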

\proof~
\\1. It is obvious that $v$ and $z$ commute. On the other hand, we have
\begin{eqnarray*}
zv^{2}&=&av^{2}+xv^{2}-v^{3}\\
&=&xv^{2}\\
&=&v
\end{eqnarray*}
 and
\begin{eqnarray*}
z^{n+1}v&=&a^{n+1}v+x^{n+1}v-v^{n+2}\\
&=&x^{n+1}v\\
&=&x^{n+1}v\\
&=&x^{n}\\
&=&a^{n}+x^{n}-v^{n}\\
&=&z^{n}.
\end{eqnarray*}
Hence $v$ is the $\{1^{n},3,5\}$-inverse of $z$. For the second assertion it is sufficient to show that $v$ is a $\{4\}$-inverse of $x$, which is obvious since
$$xv^{2}=v\quad\text{and}\quad x^{2}v=ax^{2}=x.$$
2. We have
\begin{eqnarray*}
z^{n+1}a&=&a^{n+2}+x^{n+1}a-v^{n+1}a\\
&=&x^{n+1}a\\
&=&x^{n}\\
&=&z^{n}.
\end{eqnarray*}
Then the assertion follows from the obvious fact that $a$ is a $\{4,5\}$-inverse of $z$.
\endproof
\begin{prop}\label{prop,3} Let $a$ be an element of an associative ring $R$ and $n$ be a given positive integer. Suppose that $x$ is the $\{1^{n},3,5\}$-inverse of $a$ and set $z=a+x-v$ where $v=a^{2}x$. Then
\\1. $a$ is invertible if and only if $z$ is invertible; and in this case, we have $z=x$ and $a=v$.
\\2. If $R$ is a finite-dimensional algebra then $x$ and $z$ exist for all element $a\in R$. Furthermore, $z$ is a polynomial of $a$ and $x$ is a polynomial of $z$.
\\3. If $\mathcal{C}_{R}$ is the set of all sequences $\pmb{s}=(s_{k})_{k\geqslant 0}$ over $R$, then an element $\pmb{s}$ of $\mathcal{C}_{R}$ is a $U$-inverse of an element $\pmb{u}$ of $\mathcal{C}_{R}$ if and only if the element $s_{k}$ of $R$ is a $U$-inverse of the element $u_{k}$ of $R$ for all $k\geqslant 0$.
\end{prop}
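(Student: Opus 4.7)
\medskip\noindent\textbf{Proof plan.}
The plan rests on the idempotent $e=ax=xa$; since $(ax)^{k}=ax$ was established inside the proof of Lemma~\ref{lem 10}, we have $e^{2}=e$. I would first record the ring-theoretic facts on which everything else rides: $e$ commutes with each of $a,x,v,z$; $v=ae=ea$ and $x=exe$; and $n:=a-v=a(1-e)$ lies in $(1-e)R(1-e)$ and is nilpotent, because $n^{k}=a^{k}-a^{k+1}x$ vanishes once $k\geq n$ by equation $(1^{n})$. Consequently $z=x+n$ is a decomposition into the orthogonal Peirce corners $eRe$ and $(1-e)R(1-e)$, with $ez=ze=x$ and $(1-e)z=z(1-e)=n$. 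These preliminaries are routine and I would dispatch them quickly.

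For part (1), the forward direction is immediate: if $a$ is invertible, uniqueness of the Drazin inverse forces $x=a^{-1}$, whence $v=a^{2}a^{-1}=a$ and $z=a+a^{-1}-a=a^{-1}=x$. For the converse I would argue Peirce-theoretically: if $z$ has a two-sided inverse $w$ in $R$, then multiplying $ez=ze$ by $w$ on both the left and the right yields $ew=we$, and therefore $(1-e)w(1-e)$ is an inverse of $n$ in the corner ring $(1-e)R(1-e)$. Since a nilpotent element can only be a unit in the zero ring, this forces $1-e=0$, i.e.\ $ax=1$, whence also $xa=ax=1$ and $a$ is invertible with $a^{-1}=x=z$ and $v=a$. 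The passage from invertibility of $z$ in $R$ to invertibility of $n$ in the corner is the one non-routine point of (1).

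For part (2), existence of the Drazin inverse in a finite-dimensional algebra is classical: $F[a]$ is finite-dimensional, and factoring the minimal polynomial of $a$ as $t^{m}q(t)$ with $q(0)\neq 0$ produces, via B\'ezout, a polynomial expression $x=P(a)$; then $z=a+x-a^{2}x$ is manifestly a polynomial in $a$. The delicate point, and the step I expect to be the main obstacle, is showing $x\in F[z]$. Here I would invoke the previous proposition, which identifies $v$ as the Drazin inverse of $z$; applying the same finite-dimensional argument to $z$ gives $v\in F[z]$. A short direct computation then yields $vz=a^{2}x^{2}=(ax)^{2}=ax=e$, placing the idempotent $e$ itself in $F[z]$, and finally $x=ez=(vz)z=vz^{2}\in F[z]$.

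Part (3) requires essentially no work. $\mathcal{C}_{R}$ is a ring under coordinatewise addition and multiplication, and each of the defining equations $(1^{n}),(3),(4),(5)$ is a polynomial identity in two elements, so such an identity holds for $(\pmb{u},\pmb{s})$ in $\mathcal{C}_{R}$ if and only if it holds for each $(u_{k},s_{k})$ in $R$. The assertion is the definition of $U$-inverse read coordinatewise, with the single integer $n$ in $(1^{n})$ attached uniformly to the pair of sequences.
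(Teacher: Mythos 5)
Your proposal is correct, and in two places it genuinely diverges from (and arguably improves on) the paper's own argument. For the converse in part (1) the paper only writes that it ``follows similarly,'' with a dangling cross-reference, whereas you supply an actual proof: the Peirce splitting $z=x+n$ with $x\in eRe$, $n=a(1-e)\in(1-e)R(1-e)$ nilpotent, and the observation that invertibility of $z$ in $R$ forces $ew=we$ and hence invertibility of the nilpotent corner element $n$ in $(1-e)R(1-e)$, so $e=1$. That argument is sound (the key identities $ea=ae=v$, $ex=xe=x$, and $n^{k}=a^{k}-a^{k+1}x=0$ for $k\geq n$ all check out), and it is more than the paper provides. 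For the second half of part (2) the paper writes $v^{n}=H(x^{n})$ and extracts $x=z^{n+1}H(z^{n})$ using $z^{m}=x^{m}$ for $m\geq n$; you instead use the earlier proposition identifying $v$ as the Drazin inverse of $z$, deduce $v\in F[z]$ from the finite-dimensional existence theorem plus uniqueness, and finish with $vz=ax=e$ and $x=ez=vz^{2}$. Both routes work; yours avoids the slightly delicate substitution of $z^{n}$ into the polynomial $H$, at the cost of leaning on the uniqueness of the Drazin inverse to identify the polynomial expression with $v$. Parts (1, forward direction) and (3) coincide with the paper. Two cosmetic points: reusing the letter $n$ for both the nilpotent part $a-v$ and the given exponent is an unfortunate clash you should rename, and the chain $vz=a^{2}x^{2}$ is a true identity but reads as if it were a one-step simplification; it deserves the one-line computation $vz=va+vx-v^{2}=a^{3}x+ax-a^{3}x=ax$.
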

\proof~
\\1. Suppose that $a$ is invertible, then $a^{-1}=x$ and $v=a$, and hence $z=x$. The converse follows similarly from $6.$
\\2. The first assertion and the first part of the second assertion follows from the fact that every element $a\in R$, the $\{1^{n},3,5\}$-inverse of $a$ exists and lies in the subalgebra generated by $a$ (see Corollary $5$ of~\cite{Draz}). As for the second part of the second assertion, observe that since $v^{n}$ is the $\{1^{n},3,5\}$-inverse of $x^{n}$, $v^{n}$ is expressible as a polynomial in $x^{n}$, say $v^{n}=H(x^{n})$. Hence
\begin{eqnarray*}
z^{n+1}H(z^{n})&=&z^{n+1}H(x^{n})\\
&=&z^{n+1}v^{n}\\
&=&x^{n+1}a^{n}\\
&=&x
\end{eqnarray*}
Thus $x$ is a polynomial in $z$.
\\3. This follows from straightforward verification.
\endproof
\begin{remark} We note that $a$ has a $\{1^{n},3,5\}$-inverse if and only if it has a $\{1^{n},4,5\}$-inverse. In fact, the only if part is an obvious consequence of Theorem~\ref{ thm 10}, while for the if part follows from~\cite{Draz} (Theorem $4$) and the fact that $a$ is strongly $\pi$-regular in $R$ if it has a $\{1^{n},4,5\}$-inverse.
\end{remark}
\section{The complete inverse of matrices}
Now let us consider the case of matrices. It is well-known that if $A$ is a square matrix of index $p$ over a filed $F$, then the $\{1^{p},3,5\}$-inverse $A_{d}$ of $A$ exists and it is called the Drazin inverse of $A$. Moreover, if the minimal polynomial of $A$ is $m_{A}(X)=X^{q}+a_{q-1}X^{q-1}+\cdots+a_{p}X^{p}, a_{p}\neq0$, then $$A_{d}=(-1)^{p+1}a_{p}^{-p-1}A^{p}(A^{q-(p+1)}+a_{q-1}A^{q-1-(p+1)}+\cdots+a_{p+1}I)^{p+1}$$
(see Corollary $5$ and the proof of Theorem $4$ of \cite{Draz}); and if the Jordan canonical form of $A$, in the splitting field of $m_{A}(X)$ over $F$, has the form
$$A=P\begin{pmatrix} D&0\\ 0&N \end{pmatrix}P^{-1}$$
where $P$ is a nonsingular matrix, $D$ is a nonsingular matrix of order $r=\text{rank}(A^{p})$, and $N$ is a
nilpotent matrix that $N^{p}=0$, then the Drazin inverse of $A$ is
$$A_{d}=P\begin{pmatrix} D^{-1}&0\\ 0&0 \end{pmatrix}P^{-1}.$$
We have the following result
\begin{prop}
Let $A$ be a square matrix of index $p$ over $F$. Then, with the notation above, the matrix
$$
A_{c}=P\begin{pmatrix} D^{-1}&0\\ 0&N \end{pmatrix}P^{-1}.
$$
is the unique $\{1^{p},4,5\}$-inverse of $A$.
\end{prop}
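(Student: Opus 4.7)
The plan is to appeal directly to Theorem~\ref{ thm 10}, which has already established that the $\{1^{n},4,5\}$-inverse exists and is unique in the form $z=a+x-v$ with $v=a^{2}x$. So the whole task reduces to taking the known block expression for the Drazin inverse $A_{d}$ (which is the $\{1^{p},3,5\}$-inverse, with $n=p$), computing $V = A^{2}A_{d}$, and verifying that $A+A_{d}-V$ matches the asserted formula.

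First I would write everything in the common block basis afforded by $P$. Since $P$ is invertible and each matrix in play has a block-diagonal shape in this basis, all products and sums reduce to the corresponding $2\times 2$ block operations. I would record
\[
A = P\begin{pmatrix} D & 0 \\ 0 & N \end{pmatrix}P^{-1}, \qquad A_{d}=P\begin{pmatrix} D^{-1} & 0 \\ 0 & 0 \end{pmatrix}P^{-1},
\]
so that $A^{2}=P\begin{pmatrix} D^{2} & 0 \\ 0 & N^{2} \end{pmatrix}P^{-1}$ and consequently
\[
V = A^{2}A_{d} = P\begin{pmatrix} D & 0 \\ 0 & 0 \end{pmatrix}P^{-1}.
\]

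Then by Theorem~\ref{ thm 10}, the unique $\{1^{p},4,5\}$-inverse of $A$ equals
\[
A + A_{d} - V = P\left[\begin{pmatrix} D & 0 \\ 0 & N \end{pmatrix}+\begin{pmatrix} D^{-1} & 0 \\ 0 & 0 \end{pmatrix}-\begin{pmatrix} D & 0 \\ 0 & 0 \end{pmatrix}\right]P^{-1}=P\begin{pmatrix} D^{-1} & 0 \\ 0 & N \end{pmatrix}P^{-1},
\]
which is exactly the claimed $A_{c}$. Uniqueness is inherited from Theorem~\ref{ thm 10}.

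There is no real obstacle here; the only mild subtlety is making sure we are applying the existence--uniqueness theorem with the correct value of $n$, namely $n=p$ where $p$ is the index of $A$. Since $p$ is precisely the smallest integer with $N^{p}=0$, the matrix $A$ indeed admits a $\{1^{p},3,5\}$-inverse (namely $A_{d}$), so Theorem~\ref{ thm 10} applies with $n=p$ and produces a unique $\{1^{p},4,5\}$-inverse, which we have just identified with $A_{c}$. The proof can thus be kept to a few lines of block computation followed by a citation of Theorem~\ref{ thm 10}.
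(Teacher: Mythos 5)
Your proposal is correct and follows the same route as the paper: the paper also invokes Theorem~\ref{ thm 10} via the identity $A_{c}=A+A_{d}-(A_{d})_{d}$, where $(A_{d})_{d}=A^{2}A_{d}$ is exactly your $V$. You merely make explicit the block computation that the paper leaves to the reader, and your care about taking $n=p$ is sound.
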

\begin{proof}
Follows immediately from Theorem~\ref{ thm 10} and the fact that $A_{c}=A+A_{d}-(A_{d})_{d}$.
\end{proof}
\begin{remark}
It is clear that $A$ is nonsingular if and only if $A_{c}$ is nonsingular; and in this case we have $A_{c}=A_{d}=A^{-1}$. This is a particular result of Proposition~\ref{prop,3}(1).
\end{remark}
Recall that in~\cite{Mouc} we have considered the following sets
\begin{itemize}
\item $\mathcal{C}_{F}$ the $F$-algebra of all sequences over $F$.
\item $\mathcal{S}^{\ast}=\{\pmb{\lambda}=(\lambda^{k})_{k\geqslant 0}/\lambda\in F, \lambda\neq 0\}$ be the set of all nonzero geometric sequences.
\item $\mathcal{S}^{\circ}=\{\pmb{0}_{i}\in \mathcal{C}_{F}/i\in \mathbb{N}, \pmb{0}_{i}(k)=\delta_{ik}\}$.
\item $\mathcal{S}=\mathcal{S}^{\ast}\cup \mathcal{S}^{\circ}$.
\item $F_{\mathcal{S}}$ denotes the $F$-vector spaces spanned by $\mathcal{S}$. It is well known that $F_{\mathcal{S}}$ is the set of all linear recurrence sequences in $F$ whose carateristic polynomials are of the form $X^{m}P(X)$ where $P(X)$ is square-free and split with nonzero constant terms.
\item $F_{\mathcal{S}^{\ast}}$ denotes the $F$-vector spaces spanned by $\mathcal{S}^{\ast}$. It is well known that $F_{\mathcal{S}^{\ast}}$ is the set of all linear recurrence sequences in $F$ whose carateristic polynomials are square-free and split with nonzero constant terms.
\item $F_{\mathcal{S}^{\circ}}$ denotes the $F$-vector spaces spanned by $\mathcal{S}^{\circ}$. It is well known that $F_{\mathcal{S}^{\circ}}$ the set of all linear recurrence sequences in $F$ whose carateristic polynomials split and have zero as its only root.
\end{itemize}
and we have noted the following:
\begin{itemize}
\item $\mathcal{S}$, $\mathcal{S}^{\ast}$ and $\mathcal{S}^{\circ}$ are basis of $F_{\mathcal{S}}$, $F_{\mathcal{S}^{\ast}}$ and $F_{\mathcal{S}^{\circ}}$, respectively.
\item $F_{\mathcal{S}}$, $F_{\mathcal{S}^{\ast}}$ and $F_{\mathcal{S}^{\circ}}$ are subalgebras of $\mathcal{C}_{F}$. More precisely, the set $\mathcal{S}^{\ast}$ is a group, and hence $F_{\mathcal{S}^{\ast}}$ is exactly the group algebra of $\mathcal{S}^{\ast}$ over $F$.
\item $F_{\mathcal{S}^{\circ}}$ and $F_{\mathcal{S}^{\ast}}$ are supplementary vector spaces relative to $F_{\mathcal{S}}$, i.e., $F_{\mathcal{S}}=F_{\mathcal{S}^{\circ}}\oplus F_{\mathcal{S}^{\ast}}$.
\end{itemize}
Also we have considered the following automorphism
\begin{align*}
\theta: F_{\mathcal{S}^{\ast}} &\longrightarrow F_{\mathcal{S}^{\ast}}\\
\pmb{\lambda} &\longmapsto \pmb{\lambda}^{-1}
\end{align*}
Direct summing this map with any linear map
\begin{align*}
\Psi: F_{\mathcal{S}^{\circ}}\longrightarrow  F_{\mathcal{S}^{\circ}}
\end{align*}
induced The following linear map
 \begin{align*}
\theta_{\Psi}=\Psi\oplus \theta: F_{\mathcal{S}}=F_{\mathcal{S}^{\circ}}\oplus F_{\mathcal{S}^{\ast}} \longrightarrow F_{\mathcal{S}}
\end{align*}
In turn, this extends naturally to $$\mathbf{M_{q}}(F_{\mathcal{S}})_{\mathcal{T}}=
\mathbf{M_{q}}(F_{\mathcal{S}^{\circ}})\oplus\mathbf{M_{q}}(F_{\mathcal{S}^{\ast}})_{\mathcal{T}}$$
\begin{align*}
\widetilde{\theta}_{\Psi}: \mathbf{M_{q}}(F_{\mathcal{S}})_{\mathcal{T}} &\longrightarrow \mathbf{M_{q}}(F_{\mathcal{S}})_{\mathcal{T}}\\
\pmb{U} &\longmapsto \mathcal{V}_{0}\Psi(\pmb{0}_{0})+\cdots+\mathcal{V}_{n}\Psi(\pmb{0}_{n})+
\widetilde{\theta}(\mathcal{W}_{0})\widetilde{\Lambda_{0}}+\cdots+\widetilde{\theta}(\mathcal{W}_{m})\widetilde{\Lambda_{m}},
\end{align*}
where $$\mathcal{V}_{0}\pmb{0}_{0}+\cdots+\mathcal{V}_{n}\pmb{0}_{n}+\mathcal{W}_{0}\Lambda_{0}+\cdots+\mathcal{W}_{m}\Lambda_{m}$$
is the $\mathcal{P}$-cf of $\pmb{U}$, $\widetilde{\Lambda_{m}}$ is the sequence obtained by plugging in $-k$ for $k$ in $\Lambda_{m}$ and
\begin{align*}
\widetilde{\theta}: \mathbf{M_{q}}(F_{\mathcal{S}^{\ast}}) &\longrightarrow \mathbf{M_{q}}(F_{\mathcal{S}^{\ast}})\\
(A_{ij}) &\longmapsto (\theta(A_{ij}))
\end{align*}
In Theorem 3.9 of~\cite{Mouc}, we have taken $\Psi=0$ and we have proved in this case that, for all square matrix $A$, the Drazin inverse of the sequence $\pmb{A}=(A^{k})$ is $\pmb{A}_{d}=\widetilde{\theta}_{0}(\pmb{A})+\pmb{0}_{0}\pi_{0}$, where $\pi_{0}$ is the spectral projection of $A$ at $0$, which is the zero matrix if $A$ is of index $0$.
\\Now consider the case where $\Psi=id$ is the identity map and let us denote $\widetilde{\theta}_{1}=\widetilde{\theta}_{id}$.
\\With this notation we can state the following result.
\begin{thrm}\label{thm 535}
Let $A$ be a square matrix of index $t_{0}$ with coefficients in $F$ such that its characteristic polynomial splits over $F$. Then the complete inverse of the sequence $\pmb{A}$ is $\pmb{A}_{c}=\widetilde{\theta}_{1}(\pmb{A})$.
\end{thrm}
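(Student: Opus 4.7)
The plan is to reduce to a termwise identity via Proposition~\ref{prop,3}(3) and then match $\widetilde{\theta}_1(\pmb{A})(k)$ with $A_c^k$ block by block, using the Jordan decomposition of $A$.

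First, Proposition~\ref{prop,3}(3) says that the complete inverse in $\mathcal{C}_{\mathbf{M_{q}}(F)}$ is computed termwise, so it is enough to show $\widetilde{\theta}_1(\pmb{A})(k)=(A^k)_c$ for every $k\geq 0$. The earlier proposition showing that $z^k$ is the $\{1^n,4,5\}$-inverse of $a^k$ gives $(A^k)_c=A_c^k$. Writing the Jordan form $A=P\,\mathrm{diag}(D,N)\,P^{-1}$ with $D$ invertible and $N^{t_0}=0$, the target identity becomes
$$\widetilde{\theta}_1(\pmb{A})(k)=A_c^k=P\,\mathrm{diag}(D^{-k},N^k)\,P^{-1}.$$

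Next I would exploit the splitting $F_{\mathcal{S}}=F_{\mathcal{S}^{\circ}}\oplus F_{\mathcal{S}^{\ast}}$ lifted to matrices. The $\mathcal{P}$-canonical form of $\pmb{A}$ breaks into a nilpotent summand $\sum_{i=0}^{t_0-1}\mathcal{V}_i\pmb{0}_i$ encoding the sequence $(P\,\mathrm{diag}(0,N^k)\,P^{-1})_{k\geq 0}$ (so in particular $\mathcal{V}_0=\pi_0$) and a geometric summand $\sum_j\mathcal{W}_j\Lambda_j$ encoding $(P\,\mathrm{diag}(D^k,0)\,P^{-1})_{k\geq 0}$. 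Since $\widetilde{\theta}_1=\mathrm{id}\oplus\theta$, the nilpotent part is preserved while the geometric part is sent, via $\theta(\pmb{\lambda})=\pmb{\lambda}^{-1}$, to the sequence obtained by plugging $-k$ for $k$ in each $\Lambda_j$. This is exactly what Theorem $3.9$ of~\cite{Mouc} computes on the invertible block, yielding that $\widetilde{\theta}$ transforms the geometric summand into $(P\,\mathrm{diag}(D^{-k},0)\,P^{-1})_{k\geq 0}$.

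Summing the two contributions at step $k$ gives
$$\widetilde{\theta}_1(\pmb{A})(k)=P\,\mathrm{diag}(0,N^k)\,P^{-1}+P\,\mathrm{diag}(D^{-k},0)\,P^{-1}=P\,\mathrm{diag}(D^{-k},N^k)\,P^{-1}=A_c^k,$$
as required. The main obstacle is the bookkeeping in the previous paragraph: verifying that the $\mathcal{P}$-canonical form of $\pmb{A}$ really splits along the Jordan blocks with $\mathcal{V}_i=P\,\mathrm{diag}(0,N^i)\,P^{-1}$, and that $\widetilde{\theta}$ indeed maps the geometric summand to the inverted block. The $\Psi=0$ case is Theorem $3.9$ of~\cite{Mouc}; the novelty here is only that choosing $\Psi=\mathrm{id}$ retains the nilpotent contribution, which is precisely what turns $A_d^k$ into $A_c^k$.
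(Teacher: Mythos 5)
Your argument is correct, but it takes a genuinely different route from the paper's. The paper never opens up the Jordan decomposition here: it works entirely in the ring of sequences, applying Theorem~\ref{ thm 10} to write $\pmb{A}_c=\pmb{A}+\pmb{A}_D-(\pmb{A}_D)_D$, then invoking Theorem 3.9 of~\cite{Mouc} twice --- once to get $\pmb{A}_D=\widetilde{\theta}(\pmb{A}-N(A))+\pmb{0}_0\pi_0$, and once more, using that $\widetilde{\theta}\circ\widetilde{\theta}=\mathrm{id}$ on the geometric part, to get $(\pmb{A}_D)_D=\pmb{A}-N(A)+\pmb{0}_0\pi_0$ --- after which the three terms collapse to $N(A)+\widetilde{\theta}(\pmb{A}-N(A))=\widetilde{\theta}_1(\pmb{A})$ with no termwise or blockwise computation at all. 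You instead reduce via Proposition~\ref{prop,3}(3) to the pointwise identity $\widetilde{\theta}_1(\pmb{A})(k)=A_c^k$ and match Jordan blocks; that is valid, and it has the merit of exhibiting the answer explicitly as $P\,\mathrm{diag}(D^{-k},N^k)\,P^{-1}$, consistent with Proposition 3.1. The cost is exactly the ``bookkeeping'' you flag: you must justify that the $\mathcal{P}$-cf splits along the Jordan blocks, i.e.\ that the full geometric summand evaluates to $P\,\mathrm{diag}(D^{k},0)\,P^{-1}$ and the non-geometric one to $P\,\mathrm{diag}(0,N^{k})\,P^{-1}$. This can be closed using the same Theorem 3.9 you already cite (it identifies $\widetilde{\theta}$ of the geometric summand with $\pmb{A}_d-\pmb{0}_0\pi_0$, i.e.\ with $(P\,\mathrm{diag}(D^{-k},0)P^{-1})_k$, and applying the involution $\widetilde{\theta}$ once more pins down the geometric summand itself), so there is no gap --- but the paper's identity $a_c=a+a_d-(a_d)_d$ accomplishes the whole proof in three lines without assuming anything about the internal structure of the $\mathcal{P}$-canonical form beyond the statement of Theorem 3.9.
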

\proof~
\\Let $N(A)$ be the non-geometric part of $A$. Then we have
\begin{eqnarray*}
\pmb{A}_{D}&=&\widetilde{\theta}_{0}(\pmb{A})+\pmb{0}_{0}\pi_{0}\\
&=&\widetilde{\theta}(\pmb{A}-N(A))+\pmb{0}_{0}\pi_{0}
\end{eqnarray*}
and then
\begin{eqnarray*}
(\pmb{A}_{D})_{D}&=&\widetilde{\theta}_{0}(\widetilde{\theta}(\pmb{A}-N(A))+\pmb{0}_{0}\pi_{0})+\pmb{0}_{0}\pi_{0}\\
&=&\widetilde{\theta}\circ \widetilde{\theta}(\pmb{A}-N(A))+\pmb{0}_{0}\pi_{0}\\
&=&\pmb{A}-N(A)+\pmb{0}_{0}\pi_{0}.
\end{eqnarray*}
 Hence
\begin{eqnarray*}
\pmb{A}+\pmb{A}_{D}-(\pmb{A}_{D})_{D}&=&N(A)+\widetilde{\theta}(\pmb{A}-N(A))\\
&=&\widetilde{\theta}_{1}(\pmb{A}).
\end{eqnarray*}
Therefore, $\pmb{A}_{c}=\widetilde{\theta}_{1}(\pmb{A})$.
\endproof
\begin{remark} $\pmb{A}_{c}$ is obtained by simply plugging in $-k$ for $k$ in the $\mathcal{P}$-canonical form of $A$.
\end{remark}
\begin{example}\label{example 1}
Let $$A=\begin{pmatrix}
1&1&1&0\\1&1&1&-1\\0&0&-1&1\\0&0&1&-1
\end{pmatrix}$$
and
$$A(k)=\begin{pmatrix}
2^{-1+k}&2^{-1+k}&\frac{1}{16}2^{k}((-1)^{1+k}+5)&\frac{1}{16}2^{k}((-1)^{k}-1)\vspace*{0.1pc}\\
2^{-1+k}&2^{-1+k}&\frac{5}{16}2^{k}((-1)^{1+k}+1)&\frac{1}{16}2^{k}(5(-1)^{k}-1)\vspace*{0.1pc}\\
0&0&(-1)^{k}2^{-1+k}&(-1)^{1+k}2^{-1+k}\vspace*{0.1pc}\\
0&0&(-1)^{1+k}2^{-1+k}&(-1)^{k}2^{-1+k}
\end{pmatrix}.$$
Then we have
\begin{itemize}
\item The non-geometric part of $A$ is $(I_{4}-A(0))\pmb{0}_{0}+(A-A(1))\pmb{0}_{1}$.
\item The geometric part of $A$ is
$$\begin{pmatrix}
2^{-1}(2^{k})&2^{-1}(2^{k})&\frac{5}{16}(2^{k})-\frac{1}{16}((-2)^{k})&\frac{-1}{16}(2^{k})+\frac{1}{16}((-2)^{k})\vspace*{0.1pc}\\
2^{-1}(2^{k})&2^{-1}(2^{k})&\frac{5}{16}(2^{k})-\frac{5}{16}((-2)^{k})&\frac{-1}{16}(2^{k})+\frac{5}{16}((-2)^{k})\vspace*{0.1pc}\\
0&0&2^{-1}((-2)^{k})&-2^{-1}((-2)^{k})\vspace*{0.1pc}\\
0&0&-2^{-1}((-2)^{k})&2^{-1}((-2)^{k})
\end{pmatrix}$$
\item For all $k\geq 0$, $A_{c}^{k}=$
$$\begin{pmatrix}
2^{-1-k}+\frac{\pmb{0}_{0}(k)}{2}&2^{-1-k}-\frac{\pmb{0}_{0}(k)}{2}&\frac{1}{16}2^{-k}((-1)^{1+k}+5)-\frac{\pmb{0}_{0}(k)}{4}+
\frac{\pmb{0}_{1}(k)}{4}&
\frac{1}{16}2^{-k}((-1)^{k}-1)+\frac{5\pmb{0}_{1}(k)}{4}\vspace*{0.1pc}\\
2^{-1-k}-\frac{\pmb{0}_{0}(k)}{2}&2^{-1-k}+\frac{\pmb{0}_{0}(k)}{2}&\frac{5}{16}2^{-k}((-1)^{1+k}+1)-\frac{\pmb{0}_{1}(k)}{4}&
\frac{1}{16}2^{-k}(5(-1)^{k}-1)-\frac{\pmb{0}_{0}(k)}{4}-\frac{\pmb{0}_{1}(k)}{4}\vspace*{0.1pc}\\
0&0&(-1)^{k}2^{-1-k}+\frac{\pmb{0}_{0}(k)}{2}&(-1)^{1+k}2^{-1-k}+\frac{\pmb{0}_{0}(k)}{2}\vspace*{0.1pc}\\
0&0&(-1)^{1+k}2^{-1-k}+\frac{\pmb{0}_{0}(k)}{2}&(-1)^{k}2^{-1-k}+\frac{\pmb{0}_{0}(k)}{2}
\end{pmatrix}$$
\item In particular,
$$A_{c}=\begin{pmatrix}
\frac{1}{4}&\frac{1}{4}&\frac{7}{16}&\frac{19}{16}\vspace*{0.1pc}\\
\frac{1}{4}&\frac{1}{4}&\frac{1}{16}&\frac{-7}{16}\vspace*{0.1pc}\\
0&0&\frac{-1}{4}&\frac{1}{4}\vspace*{0.1pc}\\
0&0&\frac{1}{4}&\frac{-1}{4}
\end{pmatrix}.$$
\end{itemize}
\end{example}
\begin{example} Take $x=0$ in the example 3.5 of~\cite{Mouc} to have
\begin{itemize}
\item $$E=\begin{pmatrix}
2\sqrt{3}-10&2\sqrt{3}-23&\sqrt{3}-5\\4&\sqrt{3}+9&2\\
-2\sqrt{3}+2&-4\sqrt{3}+5&-\sqrt{3}+1
\end{pmatrix}$$
\item and
$$E(k)=
\begin{pmatrix}
e_{11}(k)&e_{12}(k)&e_{13}(k)\\e_{21}(k)&e_{22}(k)&e_{23}(k)\\e_{31}(k)&e_{32}(k)&e_{33}(k)
\end{pmatrix}$$
where
\begin{eqnarray*}
e_{11}(k)&=&2^{k+1}(\cos(\frac{k\pi}{6})-5\sin(\frac{k\pi}{6}))\\
e_{12}(k)&=&2^{k+1}(\cos(\frac{k\pi}{6})-\frac{23}{2}\sin(\frac{k\pi}{6}))\\
e_{13}(k)&=&2^{k}(\cos(\frac{k\pi}{6})-5\sin(\frac{k\pi}{6}))\\
e_{21}(k)&=&2^{k+2}\sin(\frac{k\pi}{6})\\
e_{22}(k)&=&2^{k}(\cos(\frac{k\pi}{6})+9\sin(\frac{k\pi}{6}))\\
e_{23}(k)&=&2^{k+1}\sin(\frac{k\pi}{6})\\
e_{31}(k)&=&-2^{k+1}(\cos(\frac{k\pi}{6})-\sin(\frac{k\pi}{6}))\\
e_{32}(k)&=&-2^{k+2}(\cos(\frac{k\pi}{6})-\frac{5}{4}\sin(\frac{k\pi}{6}))\\
e_{33}(k)&=&-2^{k}(\cos(\frac{k\pi}{6})-\sin(\frac{k\pi}{6}))
\end{eqnarray*}
\item We have, for all $k\geq1$,
$E^{k}=E(k)$ and
$$E(0)=\begin{pmatrix} 2&2&1\\0&1&0\\-2&-4&-1\end{pmatrix}$$
Then $E$ is singular with index one.
\item For all $k\geq1$, we have $E_{c}^{k}=E(-k)$.
\item In particular,
$$E_{c}=\begin{pmatrix} \frac{\sqrt{3}+5}{2}&\frac{2\sqrt{3}+23}{4}&\frac{\sqrt{3}+5}{4}\vspace*{0.1pc}\\
-1&\frac{\sqrt{3}-9}{4}&\frac{-1}{2}\vspace*{0.1pc}\\\frac{-\sqrt{3}-1}{2}&\frac{-4\sqrt{3}-5}{4}&
\frac{-\sqrt{3}-1}{4}\end{pmatrix}$$
\end{itemize}
\end{example}

\end{document}